\documentclass{amsart}

\usepackage{a4wide,amsmath,amssymb}
\usepackage[toc,page]{appendix}
\usepackage{hyperref}
\usepackage{url}
\usepackage{courier}
\usepackage{mathtools}

%

\usepackage{xcolor}

\newtheorem{thm}{Theorem}[section]
\newtheorem{lemma}[thm]{Lemma}

\theoremstyle{remark}
\newtheorem{rem}[thm]{Remark}

\theoremstyle{definition}

\newtheoremstyle{Claim}{}{}{\itshape}{}{\itshape\bfseries}{:}{ }{#1}
\theoremstyle{Claim}

\newcommand{\R}{\mathbb{R}}

%


\makeatletter
\@namedef{subjclassname@2020}{\textup{2020} Mathematics Subject Classification}
\makeatother

\theoremstyle{plain}

\def\sideremark#1{\ifvmode\leavevmode\fi\vadjust{
\vbox to0pt{\hbox to 0pt{\hskip\hsize\hskip1em
\vbox{\hsize3cm\tiny\raggedright\pretolerance10000
\noindent #1\hfill}\hss}\vbox to8pt{\vfil}\vss}}}

\begin{document}

\title[]{On the strong maximum principle for fully nonlinear parabolic equations of second order}

\author{Alessandro Goffi} 

\date{\today}
\subjclass[2020]{35B50, 35D40, 35K10.}
\keywords{Strong maximum principle, Strong comparison principle, Fully nonlinear equation, Bellman parabolic equation, Isaacs parabolic equation, Uniqueness of solutions.}
 \thanks{The author is member of the Gruppo Nazionale per l'Analisi Matematica, la Probabilit\`a e le loro Applicazioni (GNAMPA) of the Istituto Nazionale di Alta Matematica (INdAM). The author was partially supported by the INdAM-GNAMPA Project 2023 ``Problemi variazionali/nonvariazionali: interazione tra metodi integrali e principi del massimo''  and by the King Abdullah University of Science and Technology (KAUST) project CRG2021-4674 ``Mean-Field Games: models, theory and computational aspects".}

\address{Dipartimento di Matematica ``Tullio Levi-Civita", Universit\`a degli Studi di Padova, Via Trieste 63, 35121 Padova, Italy} \email{alessandro.goffi@unipd.it}

\maketitle
\begin{abstract}
We provide a proof of strong maximum and minimum principles for fully nonlinear uniformly parabolic equations of second order. The approach is of parabolic nature, slightly differs from the earlier one proposed by L. Nirenberg and does not exploit the parabolic Harnack inequality. 
\end{abstract}

\section{Introduction}
\label{intro}
We prove here a strong maximum principle for fully nonlinear uniformly parabolic equations of the form
\begin{equation}\label{eqintro}
F(x,t,u,Du,D^2u)-\partial_t u=0\text{ in }D\subset\R^{n+1},
\end{equation}
where $F:D\times\R\times\R^n\times\mathcal{S}_n\to\R$, $\mathcal{S}_n$ being the space of $n\times n$ symmetric matrices, is assumed to satisfy the following structure condition: there exist $b\geq0$, $c\leq0$ continuous such that for $M,N\in\mathcal{S}_n$, $N\geq0$, and every $p,q\in\R^n$, $(x,t)\in D$ we have
\begin{equation}\label{upar}
\lambda \|N\|- b|p-q|+c(r-s)\leq F(x,t,r,p, M+N) -F(x,t,s,q,N)\leq \Lambda\|N\|+ b|p-q|+c(r-s)
\end{equation}
and $F(x,t,0,0,0)=0$ on $D$, $\|N\|=\mathrm{Tr}(N)$. \\
By strong maximum (resp. minimum) principle for an evolution operator $\mathcal{P}u=0$ in $\Omega_T:=\Omega\times(0,T)$ we mean the following property:\\
\begin{center}
\textit{Any continuous viscosity subsolution (resp. supersolution) to $\mathcal{P}u=0$ in $\Omega_T$ that attains a nonnegative maximum (minimum) at $(x_0,t_0)\in \Omega_T$ is constant in $\overline{\Omega}\times(0,t_0)$.}
\end{center}
\medskip
L. Nirenberg \cite{Nirenberg53}, see also \cite{FriedmanBook}, proved the strong maximum principle for classical solutions of linear parabolic equations, extending the proof due to E. Hopf. Then, this was proved for fully nonlinear parabolic equations in \cite{DaLiopar,CLN}. The proof of the vertical propagation of maximum points in \cite{DaLiopar} uses the horizontal/elliptic propagation on suitable small rectangles, cf. Corollary 2.3 in \cite{DaLiopar}. As for the elliptic strong maximum principle for viscosity solutions of linear and fully nonlinear, even degenerate, equations we refer to \cite{Calabi,BDL1,BG1,HL,GP} and the references therein. We mention that F. Da Lio \cite{DaLiopar} proved the result for rather more general equations than \eqref{eqintro}, even degenerate, following the route of \cite{Nirenberg53,FriedmanBook} in the context of viscosity solutions. The paper \cite{CLN} used the argument in \cite{Nirenberg53}, proving the result when $F\in C^1$ in all arguments in the form of a strong comparison principle: in this case $F$ was assumed strictly parabolic only and the derivatives $|F_p|,|F_r|$ are not necessarily uniformly bounded.\\
Here, our proof follows the approach of \cite{IKOreprint,Landis} valid for classical solutions of linear parabolic nondivergence structure equations and slightly differs from that of L. Nirenberg \cite{Nirenberg53}, L. Caffarelli-Y. Li-L. Nirenberg \cite{CLN} and F. Da Lio \cite{DaLiopar}. The main difference with respect to the aforementioned works is the choice of a space-time barrier-like function that allows to show the vertical propagation of maximum points on inclined cylinders whose axes are broken lines starting from the maximum points. Other different proofs of the strong maximum principle for linear and nonlinear parabolic equations exploit the weak Harnack inequality \cite{Lieberman,ImbertSilvestre}, see also Proposition 4.9 in \cite{CC} for the elliptic case. \\
We emphasize the parabolic nature of our approach, since it does not exploit an elliptic argument and works directly on parabolic cylinders. Moreover, since it does not make use of the parabolic Harnack inequality as in \cite{CC,ImbertSilvestre}, such a proof could be particularly useful in those settings where the latter tool is not available in full generality: this is the case of fully nonlinear subelliptic equations \cite{BG1} and those posed on Riemannian manifolds \cite{GP}. \\
We conclude the proof of the main result with some remarks about the applications of the strong maximum principle to various qualitative properties of nonlinear evolution equations, such as the strong comparison principle for fully nonlinear parabolic equations, a Tychonoff uniqueness principle for Hessian evolution equations and a positivity result. Notably, the result also gives a proof of the strong maximum principle for fully nonlinear elliptic equations \cite{Calabi,BDL1,BG1,GP} by means of a different method. Some examples to which the results apply will be presented in the last section of the manuscript.
\section{Notations and preliminaries}
Let $b,c,f$ be continuous and bounded functions in $D\subset\R^{n+1}$, $b\geq0$, $c\leq0$ and $\lambda\leq\Lambda$ be two positive constants (the ellipticity constants). \\
We denote for $M\in\mathcal{S}_n$ by
\[
\mathcal{M}^+_{\lambda,\Lambda}(M)=\sup_{\lambda I_n\leq A\leq \Lambda I_n}\mathrm{Tr}(AM)=\Lambda \sum_{e_i>0}e_i(M)+\lambda\sum_{e_i<0}e_i(M),
\]
\[
\mathcal{M}^-_{\lambda,\Lambda}(M)=\inf_{\lambda I_n\leq A\leq \Lambda I_n}\mathrm{Tr}(AM)=\lambda \sum_{e_i>0}e_i(M)+\Lambda\sum_{e_i<0}e_i(M),
\]
$e_i$ being the eigenvalues of $M$, the Pucci's extremal operators. We denote by $\underline{S}(\lambda,\Lambda,b,c,f)$ the space of continuous functions $u\in C(D)$ such that 
\[
\mathcal{M}^+_{\lambda,\Lambda}(D^2u)+b(x,t)|Du|+c(x,t)u-\partial_t u\geq f(x,t)
\]
 in the viscosity sense in $Q$. Similarly, we denote by $\overline{S}(\lambda,\Lambda,b,c,f)$ the space of continuous functions $u\in C(D)$ such that 
 \[
 \mathcal{M}^-_{\lambda,\Lambda}(D^2u)-b(x,t)|Du|+c(x,t)u-\partial_t u\leq f(x,t).
 \]
 Note that if $F$ in \eqref{eqintro} satisfies \eqref{upar} and $F(x,t,0,0,0)=0$, then it satisfies in the viscosity sense
 \[
 \mathcal{M}^+_{\lambda,\Lambda}(D^2u)+b(x,t)|Du|+c(x,t)u-\partial_t u\geq0\geq  \mathcal{M}^-_{\lambda,\Lambda}(D^2u)-b(x,t)|Du|+c(x,t)u-\partial_t u.
 \]
 For more details on these classes and the proof of the results we refer to \cite{CC}.\\
We follow the notation in \cite{Landis}. We denote by $\mathbb{Q}^{t_1,t_2}_{x_0,R}:=\{(x,t)\in \R^{n+1}:|x-x_0|<R,t_1<t<t_2\}$ a cylinder in $\R^{n+1}$. Moreover, $S^{t_1,t_2}_{x_0,R}$ is the lateral (closed) surface, $Q^{t_1}_{x_0,R}$ is the lower (open) base (i.e. the $n$-dimensional open ball of radius $R$ centered at $(x_0,t_1)$ on the hyperplane $t=t_1$) and $Q^{t_2}_{x_0,R}$ is the upper (open) base of the cylinder $\mathbb{Q}^{t_1,t_2}_{x_0,R}$. Here, the $t$-axis is directed upward.\\
Let $D\subset \R^{n+1}$ be a bounded domain. By the upper base of the domain $D$, denoted with $\gamma(D)$, we mean the set of those points $(x,t)\in \partial D$ satisfying the following property: for each point of $\gamma(D)$ there exists $h>0$ such that the cylinder $\mathbb{Q}^{t-h,t}_{x,h}$ belongs to $D$, while the cylinder $\mathbb{Q}^{t,t+h}_{x,h}$ lies outside $D$. Let $(x_0,t_0)\in\overline{D}$. A set $D'$ is called subdomain of $D$ subordinate to $(x_0,t_0)$ if $(x_0,t_0)$ can be joined with every point of $D'$ by a broken line that lies  in $D$ (except at most $(x_0,t_0)$), is uniquely projected to the $t$-axis (i.e. two points of the broken line are projected to different points of the $t$-axis), and has $(x_0,t_0)$ as upper end-point.
\section{Main result}
We consider the following equation
\begin{equation}\label{maineq}
F(x,t,u,Du,D^2u)-\partial_t u=0\text{ in }D.
\end{equation}
The next result is a fully nonlinear version of Theorem 2.3 in Chapter 3 of \cite{Landis}, cf. also Theorem 6 in \cite{IKOreprint}, and applies to viscosity solutions. Note also that the next property holds for a general parabolic domain in $\R^{n+1}$ rather than a cylinder of the form $\Omega\times(0,T)$.
\begin{thm}[Strong maximum/minimum principles]\label{mainSMP}
Let $D$ be a domain in $\R^{n+1}$ with upper base $\gamma(D)$. Let $u$ be a continuous viscosity subsolution (resp. supersolution) in $D\cup\gamma(D)$ to \eqref{maineq}, with $F$ satisfying \eqref{upar}, such that $c(x,t)\leq0$, $b\geq0$ and $b,c$ are bounded,  attaining a nonnegative maximum (nonpositive minimum) at some point $(x_0,t_0)\in D\cup\gamma(D)$. Then, $u$ is constant in a subdomain $D'$ of $D$ subordinate to the point $(x_0,t_0)$.
\end{thm}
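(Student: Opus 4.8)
The plan is to prove the maximum (subsolution) case; the minimum (supersolution) case follows by replacing $u$ with $-u$. Set $M:=u(x_0,t_0)\ge 0$ and introduce the nonnegative excess $w:=M-u\ge 0$. Since the structure condition \eqref{upar} places $u$ in $\underline{S}(\lambda,\Lambda,b,c,0)$, using $\mathcal{M}^+_{\lambda,\Lambda}(-X)=-\mathcal{M}^-_{\lambda,\Lambda}(X)$ a direct computation shows that $w$ is a viscosity supersolution of
\[
\mathcal{M}^-_{\lambda,\Lambda}(D^2 w)-b|Dw|+c\,w-\partial_t w\le cM\le 0,
\]
so that $w\in\overline{S}(\lambda,\Lambda,b,c,0)$, $w\ge 0$, with $w(x_0,t_0)=0$. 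Here the sign $c\le0$ is favorable: the zeroth-order term $c\,w\le0$ cannot destroy the minimum principle for $w$. The goal is thus reformulated: the contact set $\Gamma:=\{w=0\}$ contains a subdomain $D'$ subordinate to $(x_0,t_0)$.

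The heart of the argument is a Hopf-type barrier adapted to an inclined space-time cylinder whose axis descends in $t$. On such a cylinder I would take, with vertex data $\bar x,\bar t$, radius $\rho$, tilt $\kappa>0$ and $\al>0$ large,
\[
v(x,t)=e^{-\al\left(|x-\bar x|^2+\kappa(t-\bar t)\right)}-e^{-\al\rho^2},\qquad \phi:=|x-\bar x|^2+\kappa(t-\bar t).
\]
Computing $Dv,D^2v,\partial_t v$ and evaluating the minimal Pucci operator on the region $|x-\bar x|^2>1/(2\al)$, where the single radial eigenvalue of $D^2v$ is positive, one finds
\[
\mathcal{M}^-_{\lambda,\Lambda}(D^2v)-b|Dv|+c\,v-\partial_t v=e^{-\al\phi}\Big[4\lambda\al^2|x-\bar x|^2-2\al\big(\lambda+\Lambda(n-1)\big)-2b\al|x-\bar x|+\al\kappa+c\Big]-c\,e^{-\al\rho^2}.
\]
On a spatial annulus $R_1\le|x-\bar x|\le R_2$ with $R_1>0$ fixed, the quadratic term $4\lambda\al^2R_1^2$ dominates the $O(\al)$ contribution $\al\kappa$ and the bounded term $c$ once $\al$ is large, while $-c\,e^{-\al\rho^2}\ge0$ only helps; hence $v$ is a classical \emph{strict} subsolution there, i.e. the left-hand side is $>0$ on the closed inclined annular region. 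The tilt $\kappa>0$ orients $\{v>0\}=\{\phi<\rho^2\}$ so that it narrows as $t$ increases, matching a broken-line segment that descends in time.

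Using $v$ I would establish the local statement: if $w(P)=0$ at a point $P$, then $w\equiv0$ on a small inclined cylinder with upper vertex $P$ extending to earlier times. Arguing by contradiction, suppose $w>0$ at some point of such a cylinder; sliding the barrier region I obtain a first contact point $P'$ on the boundary of an inclined annulus on which $w>0$ in the interior, $w(P')=0$, and $P'$ is reachable from earlier times. Scaling by a small $\eta>0$ so that $\eta v\le w$ on the parabolic boundary of the annulus (using $w>0$ on the compact inner part and $v=0$ on the outer level set), the parabolic comparison principle for the classes $\underline{S}/\overline{S}$, see \cite{CC}, gives $w\ge\eta v$ throughout the annulus. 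Since $v\ge0$ and $w(P')=0$ force $v(P')=0$, the function $\eta v$ touches $w$ from below at $P'$ and is an admissible test function for the supersolution $w$ there; as $w(P')=\eta v(P')=0$ the zeroth-order terms agree, and we obtain $\mathcal{M}^-_{\lambda,\Lambda}(D^2(\eta v))-b|D(\eta v)|+c\,w-\partial_t(\eta v)\le0$ at $P'$, contradicting the strict inequality of the previous paragraph. Hence $w\equiv0$ on the inclined cylinder below $P$.

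Finally I would globalize by chaining the local lemma along broken lines. Any point of a subdomain $D'$ subordinate to $(x_0,t_0)$ is joined to $(x_0,t_0)$ by a broken line monotone in $t$; each straight segment is covered by finitely many inclined cylinders of the previous step, propagating $w=0$ downward from $(x_0,t_0)$ along the whole line, and a continuation argument (the parameter set along the line on which $w=0$ is open by the local lemma and closed by continuity) yields $\Gamma\supseteq D'$, that is $u\equiv M$ on $D'$. I expect the main obstacle to be the barrier geometry of the two middle paragraphs: one must tune the tilt $\kappa$ and the radii $R_1,R_2,\rho$ so that the inclined annular regions genuinely respect the parabolic direction (propagation only backward in time), so that the contact point $P'$ lies where the strict inequality holds and is accessible by an interior inclined cylinder from earlier times, and so that these choices remain compatible, uniformly, with the fixed broken-line segments during the chaining; by contrast the reduction and the viscosity comparison are routine given \cite{CC}.
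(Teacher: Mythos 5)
Your reduction $w=M-u\in\overline{S}(\lambda,\Lambda,b,c,0)$, the Pucci computation for the exponential barrier on the annulus, and the viscosity touching argument at an outer contact point are all correct as far as they go. But the step you yourself defer as ``the main obstacle'' is a genuine, unresolved gap, and it is the crux of any parabolic Hopf-type proof: nothing in your sliding construction prevents the first contact point $P'$ from being the \emph{polar} point of the level set $\{\phi=\rho^2\}$, i.e.\ its vertex, which lies on the barrier's spatial axis. This is not a pathological case; it is exactly the case the theorem must handle. If $\{w=0\}$ were locally the horizontal plane $\{t=t_P\}$ through $P$ (the configuration you have to rule out), then sliding your region upward from $Q$ produces first contact precisely at the vertex, since the vertex is the topmost point of $\{\phi\le\rho^2\}$. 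At points on the axis your barrier is not a strict subsolution: there $D^2v=-2\alpha e^{-\alpha\phi}I_n$, the bracket becomes $\alpha(\kappa-2n\Lambda)+c$, and for $\kappa<2n\Lambda$ one checks the full left-hand side is strictly negative for every $\alpha$. Nor can you simply take $\kappa>2n\Lambda$: the region $\{\phi<\rho^2\}$ loses squared spatial radius at rate $\kappa$ per unit time, so it can join a data disc of radius $r_0$ at time $t'$ to a point at time $t_2$ only if $\kappa(t_2-t')<r_0^2$, and iterating in short time steps does not help because the total loss $\kappa(t_2-t')$ is independent of the number of steps. The classical escape (Nirenberg, Friedman) is to prove a \emph{horizontal} propagation lemma first --- sliding space-time balls together with a separate trick (constancy of the maximal squared radius) to exclude polar contact --- which yields $u<M$ on full spatial slices and thereby supplies the wide bottom discs that make short, steep paraboloid steps sufficient; your proposal has no substitute for that lemma. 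A secondary gap: your $\phi$ has a vertical axis ($\bar x$ fixed), so your local lemma cannot track genuinely inclined segments of a broken line; you would need the drift change of variables $\tilde x=x-\eta(t-t_1)$ of Lemma~\ref{incl}.

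The paper's proof (Lemma~\ref{axis}) is built precisely so that no contact point ever needs to be located. Its barrier $v=M-\alpha e^{-\beta(t-t')}[r_0^2-|x-x_0|^2]^2$ lives on the whole cylinder: the quartic factor vanishes on the lateral surface, so $v=M\ge u$ there; $v>M-\alpha>u$ on the small bottom disc obtained by continuity at the single time level $t'$; and for $\beta$ large $v$ is a strict supersolution on the entire cylinder \emph{including the axis}, because the term $\beta[r_0^2-|x-x_0|^2]^2$ dominates wherever $8\lambda|x-x_0|^2$ does not --- this is exactly what your exponential barrier, which is only quadratic in the defining function, cannot achieve. One application of the comparison principle then contradicts $u=M$ at the already known maximum point $(x_0,t_2)$, with no sliding, no contact point, and no horizontal lemma. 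So the repair for your argument is not a tuning of $\kappa,R_1,R_2,\rho$: either you replace the Hopf barrier by one whose strictness survives on the axis (the paper's choice), or you prove the horizontal propagation lemma first, at which point you have reconstructed the Nirenberg--Friedman proof rather than completed the one you sketched.
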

To prove the strong maximum principle we need some preliminary lemmas. 
The next is the crucial step towards the propagation of maximum points vertically in the parabolic cylinder.
\begin{lemma}\label{axis}
Under the same assumptions of Theorem \ref{mainSMP}, let $u$ be a continuous viscosity subsolution (resp. supersolution) of \eqref{maineq} in the cylinder $\mathbb{Q}^{t_1,t_2}_{x_0,R}$ and on the upper base $Q^{t_2}_{x_0,R}$. If $u$ attains the nonnegative maximum (nonpositive minimum) $M$ at $(x_0,t_2)$, then the maximum (minimum) propagates, namely $u=M$, on the axis of the cylinder.
\end{lemma}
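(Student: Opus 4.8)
The plan is to normalize the problem to a nonnegative supersolution vanishing at the vertex and then propagate the zero along the axis by a space-time Hopf-type barrier. First I would reduce the two cases to one. A continuous viscosity subsolution (resp.\ supersolution) of \eqref{maineq} with $F$ as in \eqref{upar} belongs to $\underline{S}(\lambda,\Lambda,b,c,0)$ (resp.\ $\overline{S}(\lambda,\Lambda,b,c,0)$), as recorded in the preliminaries. In the subsolution/maximum case set $w:=M-u$; using $c\le 0$, $M\ge 0$ and $\mathcal{M}^-_{\lambda,\Lambda}(-X)=-\mathcal{M}^+_{\lambda,\Lambda}(X)$ one finds
\[
\mathcal{M}^-_{\lambda,\Lambda}(D^2 w)-b|Dw|+cw-\partial_t w\le cM\le 0,
\]
so $w\in\overline{S}(\lambda,\Lambda,b,c,0)$, $w\ge 0$, and $w(x_0,t_2)=0=\min w$. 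The supersolution/minimum case reduces in the same way after subtracting the nonpositive minimum value. Hence it suffices to prove: a nonnegative $w\in\overline{S}(\lambda,\Lambda,b,c,0)$ with $w(x_0,t_2)=0$ satisfies $w\equiv 0$ on the axis $\{x_0\}\times[t_1,t_2]$.

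Next I would argue by contradiction through the lowest level reached by the zero. Put
\[
\theta:=\inf\{s\in[t_1,t_2]:\ w(x_0,\cdot)\equiv 0\ \text{on}\ [s,t_2]\},
\]
and suppose $\theta>t_1$. By continuity $w(x_0,\theta)=0$, and by minimality of $\theta$ there are times arbitrarily close below $\theta$ at which $w$ is strictly positive on the axis; fixing one such time and using continuity, $w$ is bounded below by a positive constant on a small space-time ball sitting below $(x_0,\theta)$. The aim is to show that this forces $w(x_0,\cdot)\equiv 0$ slightly below $\theta$ as well, contradicting the definition of $\theta$ and yielding $\theta=t_1$.

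The core of the argument is the space-time barrier. Around the portion of the axis just below $(x_0,\theta)$ I would work on an annular cylinder $\{r_0\le|x-x_0|\le\rho\}\times(\theta-h,\theta)$ and use a barrier of exponential type,
\[
W(x,t)=e^{-\alpha(|x-x_0|^2+k(\theta-t))}-e^{-\alpha\rho^2},
\]
up to a slight inclination of its axis. Choosing first $k$ large, $W$ is nonpositive on the outer lateral surface and on the lower base and nonnegative on the inner core; then, choosing $\alpha$ large depending on $n,\lambda,\Lambda,\sup b,\sup|c|,k$ and $r_0$, a direct computation shows that on $\{|x-x_0|\ge r_0\}$ the positive radial contribution to $\mathcal{M}^-_{\lambda,\Lambda}(D^2 W)$, of order $\alpha^2$, dominates the gradient, zeroth-order and time terms, of order $\alpha$, so that $W$ is a strict classical subsolution, $\mathcal{M}^-_{\lambda,\Lambda}(D^2 W)-b|DW|+cW-\partial_t W>0$. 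The restriction $|x-x_0|\ge r_0$ is essential: on the axis the radial second derivative of the Gaussian has the wrong sign and $W$ fails to be a subsolution there. This is precisely the degeneracy that the method circumvents by propagating along slightly inclined cylinders, so that the relevant contact with $w$ occurs off the central axis, in the good region of the barrier.

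Finally I would close by a parabolic Hopf-type comparison. With the inclined cylinder arranged so that the inner core dips into the ball where $w$ is positive while the outer vertex reaches $(x_0,\theta)$, I would check that $w-\eps W$ (for $\eps$ small) is nonnegative on the parabolic boundary of the annular cylinder and compare it with $w$ using the supersolution property of $w$ and the strict subsolution property of $W$: testing at a point where $w-\eps W$ would otherwise attain an interior minimum, the two inequalities are incompatible with $c\le 0$ unless $w$ already vanishes on the axis below $\theta$. This contradicts the minimality of $\theta$, whence $\theta=t_1$ and $w\equiv 0$ on the whole axis, which is the claim. I expect the main obstacle to be exactly this barrier step: tuning the inclination and the inner radius $r_0$ so that the touching happens where $W$ is genuinely a strict subsolution, and carrying the comparison through in the viscosity framework, where $u$ cannot be differentiated and the inequalities must be read off from test functions at the contact point.
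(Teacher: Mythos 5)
Your normalization $w=M-u\in\overline{S}(\lambda,\Lambda,b,c,0)$ and the reduction to Pucci inequalities are fine, but the barrier--comparison step, which you yourself flag as the expected obstacle, contains a genuine gap that inclination cannot repair. To compare $w$ with $\eps W$ on the annular inclined cylinder you must verify $w\ge \eps W$ on its \emph{entire} parabolic boundary, in particular on the inner lateral surface $\{|x-x_{\mathrm{axis}}(t)|=r_0\}$. Your barrier satisfies $W>0$ on that inner surface exactly for $t$ close to the top time $\theta$ (namely where $r_0^2+k(\theta-t)<\rho^2$), so you need a positive lower bound for $w$ there. But the only positivity you possess is on a small space--time ball around some $(x_0,\tau)$ with $\tau<\theta$; since $w(x_0,\theta)=0$, that ball cannot reach up to time $\theta$, whereas the inner core of any admissible annular cylinder must span times up to $\theta$ (its upper base has to contain the contradiction point, which lives at time $\theta$). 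Hence, however you tilt the axis, the upper portion of the inner lateral surface lies outside the set where $w\ge\delta$, precisely in the region where $W>0$, and the boundary inequality cannot be checked. In the classical route you are reproducing (Nirenberg, Friedman, Da Lio), this is exactly what the \emph{horizontal propagation lemma} supplies: it upgrades positivity of $w$ at one point to positivity on full horizontal balls, hence on a full cylinder reaching time $\theta$, and only then does the annular Hopf-type barrier close the argument. Your proposal never proves such a lemma, and the whole point of the paper is to avoid needing it.

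The paper's proof sidesteps the issue with a differently designed barrier, for which no information about $u$ on the lateral boundary is required: on the full (non-annular) cylinder $\mathbb{Q}^{t',t_2}_{x_0,r_0}$ it compares $u$ with
\[
v(x,t)=M-\alpha e^{-\beta(t-t')}\bigl[r_0^2-|x-x_0|^2\bigr]^2 .
\]
This $v$ equals $M$ on the lateral surface, so $u\le v$ there is free from $u\le M$; on the small lower base, $v\ge M-\alpha r_0^4>M-\alpha>u$ by continuity at the single point where $u<M-\alpha$; and the quartic profile is a strict classical supersolution on the whole cylinder \emph{including the axis}, because where the Pucci term is small or has the wrong sign (near the axis) the time term $\beta[r_0^2-|x-x_0|^2]^2$ with $\beta$ large dominates, while near the lateral surface the term $8\lambda|x-x_0|^2$ dominates. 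The contradiction is then read at the top center: $M=u(x_0,t_2)\le v(x_0,t_2)<M$. If you wish to keep your exponential annular barrier, you must first prove a horizontal propagation lemma (as in Da Lio's Corollary 2.3); otherwise, replacing your barrier by the quartic one above makes the $\theta$-infimum scaffolding and the inclination unnecessary, since a single comparison on the full cylinder yields the claim.
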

\begin{proof}
We prove the statement for viscosity subsolutions, the other being similar using that $u\in\underline{\mathcal{S}}$. First, since $F$ is uniformly parabolic, it is enough to prove the claim for viscosity solutions belonging to $\underline{S}$, i.e. to the viscosity inequality
\[
\mathcal{M}^+_{\lambda,\Lambda}(D^2u(x,t))+b(x,t)|Du|+c(x,t)u-\partial_t u\geq0.
\]
 Suppose by contradiction that there exists a point $(x_0,t')$ on the axis of the cylinder at which $u<M-\alpha$ for some $\alpha>0$. By continuity, we can find $r_0$ small enough such that $r_0<\min\{R,1\}$ and $u(x,t')<M-\alpha$ for $|x-x_0|<r_0$. In the inner closed cylinder $\mathbb{Q}^{t',t_2}_{x_0,r_0}$ we consider the function
\[
v(x,t)=M-\alpha e^{-\beta(t-t')}[r_0^2-|x-x_0|^2]^2.
\]
We assume for the moment that there exists $\beta>0$ such that $v$ is a classical (strict) supersolution to 
\[
\mathcal{M}^+_{\lambda,\Lambda}(D^2u(x,t))+b(x,t)|Du|+c(x,t)u-\partial_t u=0.
\]
We first show how to obtain a contradiction and conclude the proof. We first observe that $u-v$ is nonpositive on the lower base and the lateral surface of the cylinder since $v>M-\alpha$ on the lower base and equals $M$ on the lateral surface $S^{t',t_2}_{x_0,R}$. By the comparison principle, see e.g. Theorem 8.2 in \cite{CIL}, $u\leq v$ in the cylinder, and we have that
\[
M=u(x_0,t_2)\leq v(x_0,t_2)=M-\alpha r_0^2e^{-\beta(t_2-t')}<M,
\]
which gives the desired contradiction.\\
\par\smallskip
We are now left to prove the existence of $\beta$. We define $\varphi(x)=[r_0^2-|x-x_0|^2]^2$. Then
\[
\partial_{x_i}\varphi(x)=-4[r_0^2-|x-x_0|^2](x_i-(x_0)_i),
\]
\[
D^2\varphi(x)=8|x-x_0|^2\frac{x-x_0}{|x-x_0|}\otimes \frac{x-x_0}{|x-x_0|}-4[r_0^2-|x-x_0|^2]I_n,
\]
where $(x\otimes x)_{ij}=x_ix_j$. We recall that the eigenvalues of a matrix $M$ of the form
\[
M=\nu I_n+\xi v\otimes v,\ v\in\R^n,\ |v|=1,\ \nu,\xi\in\R,
\]
are $\nu$ with multiplicity $n-1$ and $\nu+\xi$. Therefore, the eigenvalues of $D^2\varphi$ are $e_1(D^2\varphi)=-4[r_0^2-|x-x_0|^2]$ with multiplicity $n-1$ and $e_2(D^2\varphi)=8|x-x_0|^2-4[r_0^2-|x-x_0|^2]=12|x-x_0|^2-4r_0^2$, which is simple. In particular, $e_1\leq0$. As for $e_2$, note that when $|x-x_0|\leq \frac{r_0^2}{3}$ we have $e_2\leq0$, hence
\begin{align*}
\mathcal{M}^+_{\lambda,\Lambda}(D^2\varphi)&=\lambda\{-4(n-1)[r_0^2-|x-x_0|^2]+8|x-x_0|^2-4[r_0^2-|x-x_0|^2\}\\
&= \lambda\{-4n[r_0^2-|x-x_0|^2]+8|x-x_0|^2\}.
\end{align*}
Thus, by the properties of Pucci's extremal operators we get
\[
\mathcal{M}^+_{\lambda,\Lambda}(D^2v)=-\alpha e^{-\beta(t-t')}\{8\lambda |x-x_0|^2-4n\lambda[r_0^2-|x-x_0|^2]\}.
\]
If instead $|x-x_0|\geq \frac{r_0^2}{3}$ (and $|x-x_0|<r_0$) we have $e_2\geq0$, and we conclude
\begin{align*}
\mathcal{M}^+_{\lambda,\Lambda}(D^2\varphi)&=\lambda\{-4(n-1)[r_0^2-|x-x_0|^2]\}+\Lambda\{8|x-x_0|^2-4[r_0^2-|x-x_0|^2\}\\
&= 8\Lambda|x-x_0|^2-[4\lambda(n-1)+4\Lambda]\{r_0^2-|x-x_0|^2\}.
\end{align*}
Then
\[
\mathcal{M}^+_{\lambda,\Lambda}(D^2v)=-\alpha e^{-\beta(t-t')}\{8\Lambda|x-x_0|^2-[4\lambda(n-1)+4\Lambda](r_0^2-|x-x_0|^2)\}.
\]
In both cases we end up with
\[
\mathcal{M}^+_{\lambda,\Lambda}(D^2v)\leq -\alpha e^{-\beta(t-t')}(8\lambda|x-x_0|^2-c_{n,\lambda,\Lambda}\{r_0^2-|x-x_0|^2\})
\]
for $c_{n,\lambda,\Lambda}:=4\lambda(n-1)+4\Lambda$. Since
\[
\partial_t v=\beta\alpha e^{-\beta(t-t')}[r_0^2-|x-x_0|^2]^2
\]
we get, using $c\leq0$,
\begin{align*}
&\mathcal{M}^+_{\lambda,\Lambda}(D^2v)+b(x,t)|Dv(x,t)|+c(x,t)v(x,t)-\partial_t v\\
&\leq Mc -\alpha e^{-\beta(t-t')}\{8\lambda|x-x_0|^2-r(x,t)[r_0^2-|x-x_0|^2]+\beta[r_0^2-|x-x_0|^2]^2\}\\
&\leq -\alpha e^{-\beta(t-t')}\{8\lambda|x-x_0|^2-K[r_0^2-|x-x_0|^2]+\beta[r_0^2-|x-x_0|^2]^2\}=:-\alpha e^{-\beta(t-t')}\Psi(|x-x_0|),
\end{align*}
where $r(x,t)$ is a bounded function, i.e. $|r(x,t)|\leq K$, for some $K>0$ depending on $n,\lambda,\Lambda$ and on the coefficients $b,c$, due to their boundedness assumptions. For $|x-x_0|=r_0$, we have 
\[
8\lambda|x-x_0|^2-K[r_0^2-|x-x_0|^2]=8\lambda r_0^2>0\ .
\]
Therefore, there is $\delta>0$ such that $\Psi$ is positive for any $\beta>0$ if $r_0^2-|x-x_0|^2\leq\delta$. Fixing such a $\delta$, we take then $\beta$ large so that $\Psi>0$ when  $r_0^2-|x-x_0|^2>\delta$.
\end{proof}
We now prove that the maximum point propagates along the axes of inclined cylinders.
\begin{lemma}\label{incl}
Consider the inclined cylinder $D^I:=\{(x,t): |x-[x_0+\eta(t-t_1)]|<R,\ t_1<t\leq t_2\}$, $\eta\in\R^n$. Let $u$ be a continuous viscosity subsolution (resp. supersolution) of \eqref{maineq} up to the upper base of $D^I$. If $u$ attains the nonnegative maximum (nonpositive minimum) $M$ at $(x_0+\eta(t_2-t_1),t_2)$. Then $u=M$ at the points $x=x_0+\eta(t-t_1)$, $t\in(t_1,t_2)$.
\end{lemma}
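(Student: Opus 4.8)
The plan is to reduce the inclined cylinder to the straight cylinder treated in Lemma~\ref{axis} by means of a shear change of variables that straightens the tilted axis, and then to verify that the extremal subsolution property is preserved, at the cost of enlarging the first-order coefficient $b$ by $|\eta|$.

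First, exactly as at the beginning of the proof of Lemma~\ref{axis}, I would use that $F$ is uniformly parabolic to reduce to the case in which $u$ is a viscosity subsolution of the extremal inequality
\[
\mathcal{M}^+_{\lambda,\Lambda}(D^2u)+b(x,t)|Du|+c(x,t)u-\partial_t u\geq0 .
\]
I would then introduce the new spatial variable $y=x-\eta(t-t_1)$, so that the inclined cylinder $D^I$ is carried onto the straight cylinder $\mathbb{Q}^{t_1,t_2}_{x_0,R}$, and set $\tilde u(y,t):=u(y+\eta(t-t_1),t)$. Under this map the maximum point $(x_0+\eta(t_2-t_1),t_2)$ of $u$ becomes the center $(x_0,t_2)$ of the upper base $Q^{t_2}_{x_0,R}$, where $\tilde u=M$, and the inclined axis $x=x_0+\eta(t-t_1)$ becomes the vertical axis $y=x_0$.

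Next I would check that $\tilde u$ is a viscosity subsolution of an admissible extremal inequality on $\mathbb{Q}^{t_1,t_2}_{x_0,R}$. If a smooth $\psi$ touches $\tilde u$ from above at $(y^*,t^*)$, then $\phi(x,t):=\psi(x-\eta(t-t_1),t)$ touches $u$ from above at the corresponding point $(x^*,t^*)=(y^*+\eta(t^*-t_1),t^*)$, and at that point $D\phi=D\psi$, $D^2\phi=D^2\psi$, $\partial_t\phi=\partial_t\psi-\eta\cdot D\psi$. Inserting these into the subsolution inequality for $u$ and using $u(x^*,t^*)=\tilde u(y^*,t^*)$ yields
\[
\mathcal{M}^+_{\lambda,\Lambda}(D^2\psi)+b|D\psi|+\eta\cdot D\psi+c\,\tilde u-\partial_t\psi\geq0 ,
\]
whence, bounding $\eta\cdot D\psi\leq|\eta|\,|D\psi|$,
\[
\mathcal{M}^+_{\lambda,\Lambda}(D^2\psi)+(b+|\eta|)|D\psi|+c\,\tilde u-\partial_t\psi\geq0 .
\]
Thus $\tilde u\in\underline{S}$ with the same ellipticity constants $\lambda,\Lambda$, with the sign condition $c\leq0$ unchanged, and with the still bounded and nonnegative drift $b+|\eta|$. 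Applying Lemma~\ref{axis} to $\tilde u$ on $\mathbb{Q}^{t_1,t_2}_{x_0,R}$, whose hypotheses now hold since $\tilde u$ attains its nonnegative maximum $M$ at the center $(x_0,t_2)$ of the upper base, I obtain $\tilde u(x_0,t)=M$ for $t\in(t_1,t_2)$. Undoing the change of variables gives $u(x_0+\eta(t-t_1),t)=M$, which is the claim.

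The only genuinely delicate step is the verification above: one must confirm that the shear $(x,t)\mapsto(x-\eta(t-t_1),t)$, though not a parabolic dilation, transports viscosity subsolutions correctly, and that the extra first-order term $\eta\cdot D\psi$ it produces is precisely of the form permitted by the structure condition~\eqref{upar}, so that Lemma~\ref{axis} applies with unchanged ellipticity constants and a merely larger, still admissible, drift. The remaining manipulations are a direct substitution.
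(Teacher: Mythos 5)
Your proof is correct and follows essentially the same route as the paper: the shear change of variables $\tilde x=x-\eta(t-t_1)$ straightening the inclined axis, followed by an application of Lemma \ref{axis} to the transformed function. The only (welcome) refinement is that you absorb the resulting drift term $\eta\cdot D\psi$ into an enlarged coefficient $b+|\eta|$, so that Lemma \ref{axis} applies verbatim with a still bounded nonnegative drift, whereas the paper instead notes that the extra first-order term does not affect the proof of Lemma \ref{axis}; you also spell out, via the test-function computation, why the shear transports viscosity subsolutions correctly, a point the paper leaves implicit.
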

\begin{proof}
It is enough to make the change of variables
\[
\tilde t=t\ ,\tilde x=x-\eta(t-t_1),
\]
which leads to the viscosity inequality
\[
\mathcal{M}^+_{\lambda,\Lambda}(\widetilde{D}^2u)+b|\widetilde{D}u|-\eta\cdot \widetilde{D}u+cu-\partial_{\tilde t}u\geq0\ ,
\]
where $\widetilde{D},\widetilde{D}^2$ stands for the gradient and Hessian operators with respect to the variable $\tilde x$, which has the same form of the one treated in the previous Lemma \ref{axis} with the addition of a term involving a first-order power of the quantity $r_0^2-|x-x_0|^2$. The presence of this new term does not affect the proof of Lemma \ref{axis}. Then, the inclined cylinder becomes $t_1<t<t_2$, $|\tilde x-x_0|<R$. We are thus in position to apply Lemma \ref{axis} from which $u\equiv M$ on $t_1<t<t_2$ and $x=x_0+\eta(t-t_1)$.
\end{proof}
\begin{proof}[Proof of Theorem \ref{mainSMP}] It is only geometrical and the same of the linear case. If $u$ attains its maximum at $(x_0,t_0)\in D\cup\gamma(D)$, assume that the point $(x',t')$ can be joined to $(x_0,t_0)$ by a broken line in $D\cup\gamma(D)$ having $(x_0,t_0)$ as upper end-point. For every segment of the broken line one can construct a cylinder, which is in general inclined as in Lemma \ref{incl}, such that its lower base is orthogonal to the $t$-axis, the broken line is the axis of the cylinder and it belongs to $D\cup\gamma(D)$. We then apply repeatedly Lemma \ref{incl} to these cylinders, starting from the upper one, to find $u\equiv M$ at every point of the broken line, and hence at $(x',t')$.
\end{proof}
In the case of an equation without lower-order terms of the form
\begin{equation}\label{simpleeq}
F(x,t,D^2u)-\partial_t u=0\text{ in }D\subset\R^{n+1},
\end{equation}
in the statement of Theorem \ref{mainSMP} we can remove the sign property of the maximum/minimum point by simply saying that it attains a maximum/minimum, as the operator does not change by the addition of a constant. The result then reads as follows:
\begin{thm}\label{simplemain}
Let $D$ be a domain in $\R^{n+1}$ with upper base $\gamma(D)$. Let $u$ be a continuous viscosity subsolution (resp. supersolution) in $D\cup\gamma(D)$ to \eqref{simpleeq}, with $F$ satisfying \eqref{upar}  and attaining a maximum (minimum) at some point $(x_0,t_0)\in D\cup\gamma(D)$. Then, $u$ is constant in a subdomain $D'$ of $D$ subordinate to the point $(x_0,t_0)$.
\end{thm}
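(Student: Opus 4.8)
The plan is to reduce Theorem \ref{simplemain} to the already-established Theorem \ref{mainSMP} by exploiting the fact that equation \eqref{simpleeq} contains no zeroth-order term. Since $F$ in \eqref{simpleeq} depends only on $(x,t,D^2u)$, both differential operators appearing in the equation, namely $D^2(\cdot)$ and $\partial_t(\cdot)$, annihilate additive constants. Consequently, whenever $u$ is a viscosity subsolution of \eqref{simpleeq}, so is $u+k$ for every constant $k\in\R$: if a smooth test function $\varphi$ touches $u+k$ from above at a point, then $\varphi-k$ touches $u$ from above at the same point with identical Hessian and time derivative, and since $F$ does not see the additive constant, the subsolution inequality is preserved verbatim. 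The sign requirement in Theorem \ref{mainSMP} is precisely an artifact of the lower-order term $c\,u$ with $c\le0$, which forces one to exploit $cM\le0$ in the barrier estimate of Lemma \ref{axis}; in the absence of such a term this constraint simply disappears.

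With this observation in hand the argument is immediate. Suppose $u$ attains its maximum $M$ at $(x_0,t_0)\in D\cup\gamma(D)$ and set $w:=u-M$. By the invariance just described, $w$ is again a continuous viscosity subsolution of \eqref{simpleeq} in $D\cup\gamma(D)$, and it attains its maximum, equal to $0$, at $(x_0,t_0)$. Since $0\ge0$, the maximum of $w$ is nonnegative, so the hypotheses of Theorem \ref{mainSMP} are met: equation \eqref{simpleeq} is exactly the special case of \eqref{maineq} in which the structure condition \eqref{upar} holds with $b=c=0$, that is, plain uniform ellipticity $\lambda\|N\|\le F(x,t,M+N)-F(x,t,N)\le\Lambda\|N\|$ for $N\ge0$, and the sign and boundedness conditions $c\le0$, $b\ge0$, $b,c$ bounded are then trivially satisfied.

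I would then apply Theorem \ref{mainSMP} to $w$ to obtain that $w$ is constant in a subdomain $D'$ of $D$ subordinate to $(x_0,t_0)$. Translating back, $u=w+M$ is constant (equal to $M$) in the same subdomain $D'$, which is the desired assertion. The supersolution/minimum case is handled identically, replacing $M$ by the minimum value of $u$ and using the corresponding statement of Theorem \ref{mainSMP}. There is essentially no obstacle beyond verifying the constant-shift invariance of the viscosity notion, which is the only place where the absence of the term $c\,u$ is used; all of the genuine analytic difficulty has already been absorbed into Lemmas \ref{axis} and \ref{incl} and into Theorem \ref{mainSMP} itself.
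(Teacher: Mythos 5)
Your proof is correct and takes essentially the same route as the paper: the paper justifies Theorem \ref{simplemain} in one line, observing that the operator in \eqref{simpleeq} does not change under addition of a constant, so the sign restriction on the extremum in Theorem \ref{mainSMP} can be removed by shifting $u$ by its maximum (resp.\ minimum). Your write-up merely makes this explicit, including the verification that constant shifts preserve viscosity subsolutions and that \eqref{upar} specializes to plain uniform parabolicity with $b=c=0$.
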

Some remarks are now in order:
\begin{rem}
The strong maximum and minimum principles involve only subdomains subordinate to the maximum/minimum point $(x_0,t_0)$ and fail for larger subdomains. In this direction, we refer to \cite{Landis} for some counterexamples in the linear case.
\end{rem}
\begin{rem}\label{ell}
Theorem \ref{mainSMP} implies the validity of strong maximum and minimum principles for the elliptic problem
\[
F(x,u,Du,D^2u)=0\text{ in }\Omega,
\]
where $\Omega$ is an open connected set in $\R^n$, namely any viscosity subsolution (resp. supersolution) attaining a nonnegative maximum (nonpositive minimum) in $\Omega$ is constant. It is sufficient to notice that any solution $u(x)$ of the elliptic equation solves the parabolic problem
\[
F(x,u,Du,D^2u)-\partial_t u=0\text{ in }\Omega\times(0,1).
\]
\end{rem}
\begin{rem}
Theorem \ref{simplemain} implies a strong comparison principle for the uniformly parabolic equation $F(D^2u)-\partial_t u=0$. This property means that if $u-v\leq 0$ in $\Omega_T$ and there exists a point $z\in \Omega_T$ such that $u-v=0$ in $\Omega_T$, then $u-v=0$ in $\Omega_T$ by the strong maximum principle. An example for strictly parabolic equations can be found in Theorem 4.1 of \cite{CLN}. This follows from the fact that $u-v$ solves a parabolic Pucci's extremal equation by means of Theorem 5.3 of \cite{CC}. 
\end{rem}
\begin{rem}
The strong maximum principle also implies a Tychonoff uniqueness principle for exponentially growing solutions to the uniformly parabolic equation
\[
F(x,t,D^2u)-\partial_t u=0
\]
by means of the three-curve property in \cite{Kovats}. Indeed, it is sufficient to note that if $u,v$ are two continuous solutions of the above equation in the strip $\R^n\times(0,T)$ with $u(x,0)=v(x,0)=g(x)$ and satisfying the growth condition
\[
|u|,|v|\leq c_1e^{c_2|x|^2},\ c_1,c_2>0
\]
uniformly in $t\in(0,T)$, then $w=u-v$ solves
\[
\mathcal{M}^+_{\lambda,\Lambda}(D^2w)-\partial_t w\geq0,
\]
by the results in \cite{CC}. Therefore, we can apply Theorem 1.1 in \cite{Kovats} and the argument of Theorem 1.3 therein along with the strong maximum principle to conclude $u\equiv v$ in $\R^n\times(0,T)$.
\end{rem}
\begin{rem}
If the strong minimum principle holds for \eqref{maineq} in $\overline{D}=\overline{\Omega}_T$, then we have the following conservation of positivity result: if $u\in C(\overline{\Omega}_T)$ with  $u(x,0)=u_0(x)\geq0$ and $u_0\neq0$, then it is strictly positive in the whole cylinder.
\end{rem}
\section{Examples}
Examples of operators to which the strong maximum and/or minimum principles proved in the previous section apply are those of the form
\[
E(x,t,u,\partial_t u,Du,D^2u)=a(x,t)G(Du,D^2u)+b(x,t)\cdot Du+c(x,t)u-\partial_t u
\]
when $b,c$ are bounded, $c\leq0$, $a>0$ continuous and bounded. $G$ can be of the following form:
\begin{itemize}
\item A Pucci's extremal operator or, more generally, a Bellman or an Isaacs operator defined respectively as 
\[
G(x,t,D^2u)=\sup_{\alpha}\mathrm{Tr}(A_\alpha(x,t) D^2u),\ G(D^2u)=\inf_{\alpha}\mathrm{Tr}(A_\alpha(x,t) D^2u);
\]
\[
G(x,t,D^2u)=\sup_{\beta}\inf_{\alpha}\mathrm{Tr}(A_{\alpha,\beta}(x,t) D^2u),
\]
where the linear operators are uniformly parabolic. These evolution operators satisfy both the strong maximum and minimum principles, as already discussed in \cite{DaLiopar} using a different proof.
\item The time-dependent normalized (1-homogeneous) $p$-Laplacian, $p>1$, so that $E$ corresponds to the evolution
\[
|Du|^{2-p}\mathrm{div}(|Du|^{p-2}Du)-\partial_t u=0.
\]
It is known that such operator belongs to the classes $\mathcal{S}$ defined above for $\lambda=\min\{1,p-1\},\Lambda=\max\{1,p-1\}$, $b=c=0$. We recall that the strong maximum and minimum principle fails for the standard parabolic $p$-Laplacian, cf. Example 2.6 in \cite{DaLiopar}.

\item The truncated (degenerate) Pucci's operator defined for $k<n$ by
\[
\mathcal{M}_{\lambda,\Lambda;k}^-(D^2u)=\lambda \sum_{\substack{i=0 \\ e_i>0}}^ke_i+\Lambda \sum_{\substack{i=0 \\ e_i<0}}^ke_i
\]
satisfies the strong maximum principle by comparison with the Pucci's extremal operator $\mathcal{M}^-_{\lambda,\Lambda}$. Indeed, a subsolution to $\mathcal{M}_{\lambda,\Lambda,k}^-(D^2u)-\partial_t u=0$ is also a subsolution to $\mathcal{M}^-_{\lambda',\Lambda'}(D^2u)-\partial_t u=0$, where $\lambda',\Lambda'$ are possibly different ellipticity constants, via the inequality
\[
\mathcal{M}_{\frac{n}{k}\lambda,\frac{n}{k}\Lambda;k}^-(X)\leq \mathcal{M}_{\lambda,\Lambda;n}^-(X)=\mathcal{M}_{\lambda,\Lambda}^-(X),\ X\in\mathcal{S}_n.
\]
It is known that the strong minimum principle for this operator fails even for the simpler case $\lambda=\Lambda=1$, where $\mathcal{M}_{\lambda,\Lambda;k}^-(D^2u)=\sum_{i=1}^ke_i(D^2u)$. Symmetrically, by duality one observes that $\mathcal{M}_{\lambda,\Lambda,k}^+(D^2u)-\partial_t u$ satisfies the strong minimum principle, but not the strong maximum principle (a counterexample is Example 4 of \cite{AB} for the case $\lambda=\Lambda=1$, $k<n$). 
\item Consider now the operator $G(D^2u)=\sum_{i=1}^n\arctan e_i(D^2u)$ and the equation
\[
\sum_{i=1}^n\arctan e_i(D^2u)-\partial_t u=0.
\]
Such an equation is called potential equation for the Lagrangian Mean Curvature Flow. It is known that when
\[
\Theta> (n-2)\frac{\pi}{2}\text{ and }\Theta=\sum_{i=1}^n\arctan e_i(D^2u)
\]
the operator is quasiconcave and uniformly parabolic, and hence any subsolution (resp. supersolution) belongs to the class $\underline{S}$ ($\overline{S}$). Therefore, the operator satisfies both the strong maximum and minimum principles. The same continues to hold for its elliptic counterpart via Remark \ref{ell}. 
\end{itemize}


%
\end{document}